\date{\today}
\definecolor{lightgray}{rgb}{0.8, 0.8, 0.8}
\definecolor{darkgray}{rgb}{0.65, 0.65, 0.65}
\newcounter{todocounter}
\theoremstyle{plain}
\newtheorem{theorem}{Theorem}[section]
\newtheorem{proposition}[theorem]{Proposition}
\newtheorem{corollary}[theorem]{Corollary}
\theoremstyle{definition}
\newfont{\footsc}{cmcsc10 at 8truept}
\newfont{\footbf}{cmbx10 at 8truept}
\newfont{\footrm}{cmr10 at 10truept}
\renewenvironment{abstract}%
                {
                  \begin{list}{}%
                     {\setlength{\rightmargin}{1in}%
                      \setlength{\leftmargin}{1in}}%
                   \item[]\ignorespaces\begin{small}}%
                 {\end{small}\unskip\end{list}}
\newcommand{\C}{\mathcal{C}}
\newcommand{\st}{\::\:}
\newcommand\mybullet{\raisebox{-5pt}{\large \ensuremath{\bullet}}}
\newcommand\mycirc{\raisebox{-5pt}{\large \ensuremath{\circ}}}
\def\absdot{\@ifnextchar[{\@absdotlabel}{\@absdotnolabel}}
	\def\@absdotlabel[#1]#2{%
		\node at #2 {\normalsize \mybullet};
		\node at #2 [below=2pt] {\ensuremath{#1}};
	}
	\def\@absdotnolabel#1{%
		\node at #1 {\normalsize \mybullet};
	}
\def\absdothollow{\@ifnextchar[{\@absdothollowlabel}{\@absdothollownolabel}}
	\def\@absdothollowlabel[#1]#2{%
		\node at #2 {\normalsize \textcolor{white}{\mybullet}};
		\node at #2 {\normalsize \mycirc};
		\node at #2 [below=2pt] {\ensuremath{#1}};
	}
	\def\@absdothollownolabel#1{%
		\node at #1 {\normalsize \textcolor{white}{\mybullet}};
		\node at #1 {\normalsize \mycirc};
	}
\newcommand{\plotpartialperm}[1]{
	\foreach \i/\j in {#1} {
		\absdot{(\i,\j)};
	};
}
\let\start@align@nopar\start@align
\let\start@gather@nopar\start@gather
\let\start@multline@nopar\start@multline
\long\def\start@align{\par\start@align@nopar}
\long\def\start@gather{\par\start@gather@nopar}
\long\def\start@multline{\par\start@multline@nopar}
\title{\sc Letter Graphs and Modular Decomposition}
\author{
	Robert Ferguson and Vincent Vatter%
	\footnote{Vatter's research was partially supported by the Simons Foundation via award number 636113.}\\[-0.25ex]\\[-0.25ex]
	\small Department of Mathematics\\[-0.5ex]
	\small University of Florida\\[-0.5ex]
	\small Gainesville, Florida USA
}
\begin{document}
\maketitle


\pagestyle{main}

\vspace{-0.3in}

\begin{abstract}
We prove that if the prime graphs in a graph class have bounded lettericity, then the entire class has bounded lettericity if and only if it does not contain arbitrary large matchings, co-matchings, or a family of graphs that we call stacked paths.
\end{abstract}

\section{Introduction}
	
Our graphs are finite, simple, and undirected, and we denote the vertex set of the graph $G$ by $V(G)$. If $u$ and $v$ are adjacent vertices of $G$, then we write $u \sim v$; we write $u \nsim v$ otherwise. We extend this notation to disjoint subsets $U_1,U_2\subseteq V(G)$, writing $U_1\sim U_2$ if $u\sim v$ for every $u\in U_1$ and $v\in U_2$, and writing $U_1\nsim U_2$ if $u\nsim v$ for every $u\in U_1$ and $v\in U_2$. The neighborhood of a vertex $v \in V(G)$, denoted by $N(v)$, is the set of all vertices to which $v$ is adjacent.
	
We write $P_n$ for the path on $n$ vertices, $C_n$ for the cycle on $n$ vertices, and $K_n$ for the complete graph, or clique, on $n$ vertices. The complement of a graph $G$ is denoted by $\overline{G}$, and we refer to the complement of a clique as a \emph{co-clique}. Given graphs $G$ and $H$ on disjoint vertex sets, we denote their disjoint union by $G\uplus H$. Given a graph $G$ and a natural number $r$, we denote by $rG$ the disjoint union of $r$ copies of $G$ (where the copies are chosen to have disjoint vertex sets). For a positive integer $r$, we call the graph $rK_2$ a \emph{matching} and its complement $\overline{rK_2}$ a \emph{co-matching}.
	
A \emph{hereditary property} or (throughout this paper) \emph{class} of graphs is a set of finite graphs that is closed under isomorphism and also closed downward under the induced subgraph ordering. We are interested here in graph classes with bounded lettericity (defined in the next section). These classes are known to have many desirable propoerties. In particular, in his introduction of lettericity, Petkov\v{s}ek~\cite{petkovsek:letter-graphs-a:} proved that such classes are well-quasi-ordered by the induced subgraph order (an easy consequence of Higman's lemma), and Atminas and Lozin~\cite{atminas:labelled-induce:} have shown that classes of bounded lettericity are in fact \emph{labeled} well-quasi-ordered under the induced subgraph order. In addition to these order-theoretic considerations, there is a strong connection (first conjectured in~\cite{alecu:letter-graphs-a:abstract,alecu:letter-graphs-a:} and then established in~\cite{alecu:letter-graphs-a:iff}) between classes of bounded lettericity and the geometric grid classes of \cite{albert:geometric-grid-:,albert:inflations-of-g:} employed in the study of permutation patterns.

Our main result, Theorem~\ref{thm-prime-graphs-letters}, generalizes the following result to classes of graphs in which the prime graphs (those that cannot be decomposed via the modular decomposition) themselves have bounded lettericity. (For example, the class of cographs, defined in Section~\ref{sec-mod-decomp}, contains only three prime graphs, each of lettericity $1$).
		
\begin{theorem}[Alecu, Lozin, and De Werra~{\cite[Theorem~5]{alecu:the-micro-world:}}]
\label{thm-cographs-letters}
Let $\C$ be a class of cographs. If $\C$ contains all matchings or all co-matchings, then the lettericity of $\C$ is infinite. Otherwise, the lettericity of $\C$ is finite.
\end{theorem}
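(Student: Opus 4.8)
The plan is to analyze cographs via their modular decomposition, which for cographs takes a particularly clean form: every cograph is built up from single vertices by repeatedly taking disjoint unions and joins, so each cograph corresponds to a cotree whose internal nodes are labeled "union" or "join." The key observation is that lettericity interacts nicely with these two operations, and that the obstructions — arbitrarily large matchings and co-matchings — are themselves governed by alternating chains of unions and joins in the cotree.

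First I would establish the easy direction: if $\C$ contains all matchings $rK_2$, then its lettericity is unbounded. The intuition is that in a letter graph on a word of length $n$ over an alphabet of size $k$, the adjacency pattern is determined by a fixed decoder on $k$ letters, and a counting or pigeonhole argument shows that a graph whose structure requires "too many independent adjacency decisions" cannot be encoded over a small alphabet. Concretely, I expect one can show that the lettericity of $rK_2$ (and separately of $\overline{rK_2}$) grows with $r$, so that a class containing all matchings or all co-matchings cannot have bounded lettericity. This should follow from the defining properties of lettericity set up in the next section, possibly via an Erd\H{o}s--Szekeres-type extraction of a long monotone substructure forcing many distinct letters.

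For the harder converse direction, I would assume $\C$ contains neither all matchings nor all co-matchings, and aim to bound the lettericity uniformly. By the hypothesis there is a fixed $r$ with $rK_2 \notin \C$ and a fixed $s$ with $\overline{sK_2} \notin \C$. The strategy is to argue that this forbidden-matching and forbidden-co-matching condition severely restricts the shape of the cotrees of graphs in $\C$: a long "union chain" in the cotree (many disjoint-union nodes in sequence, each attaching a nontrivial piece) would manufacture a large matching, while a long "join chain" would manufacture a large co-matching. Thus the cotrees must have bounded "alternation depth" or some analogous structural parameter. I would then show, by induction on the cotree, that cographs whose cotrees have this bounded structure admit a letter encoding over an alphabet whose size is controlled purely by $r$ and $s$, handling the union and join operations by combining the decoders of the parts with a constant number of new letters.

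The main obstacle I anticipate is making the inductive encoding over the cotree genuinely uniform: when I combine the letter graph representations of several modules under a union or join node, I must reuse letters rather than take a fresh disjoint alphabet each time (which would blow up the alphabet size with the depth of the tree). The crux is therefore to prove that the forbidden matching/co-matching condition lets me bound not the depth of the cotree but the number of \emph{distinct adjacency behaviors} that letters must encode, so that a single fixed alphabet suffices regardless of how large the individual cograph is. I expect this to require a careful analysis of which modules can appear as children of union versus join nodes, using the absence of $rK_2$ and $\overline{sK_2}$ to limit how many children can be nontrivial or how they can be nested, and then to verify that the resulting decoder correctly reproduces all intra-module and inter-module adjacencies.
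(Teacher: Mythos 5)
Your easy direction is fine in outline and matches the paper's actual argument: Proposition~\ref{prop-lettericity-matching} proves unboundedness of $\ell(mK_2)$ by exactly the pigeonhole you gesture at (in an $r$-lettering there are only $r^2$ letterings of an edge, so a huge matching contains $mK_2$ lettered with two letters, contradicting $\ell(3K_2)=3$), and $\ell(\overline{G})=\ell(G)$ handles co-matchings. The gap is in the converse. Your central structural claim---that forbidding $rK_2$ and $\overline{sK_2}$ forces cotrees of bounded ``alternation depth''---is false: threshold graphs are $\{2K_2,\overline{2K_2}\}$-free cographs whose cotrees are alternating union/join caterpillars of unbounded depth. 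You sense this yourself and retreat to bounding ``the number of distinct adjacency behaviors,'' but that is precisely the unproven crux, and your proposal supplies no mechanism for it; as written, the inductive encoding either blows up the alphabet with the depth of the cotree or has no proof that letters can be reused.

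The paper (proving the generalization, Theorem~\ref{thm-prime-graphs-letters}, from which the cograph statement follows since cographs are $P_4$-free and their prime graphs have lettericity $1$) resolves this with two ideas you are missing. First, strip all isolated and dominating vertices before decomposing: this absorbs exactly the threshold-like chains that defeat any depth bound, at a cost of only two extra letters $\{\mathsf{i},\mathsf{d}\}$ as in the $2$-lettering of threshold graphs. Second, induct not on the cotree but on the forbidden parameters: after stripping, if $G'=G_1\uplus G_2$ then each $G_i$ contains an edge, so each $G_i$ is $(r-1)K_2$-free whenever $G$ is $rK_2$-free, and dually for joins. Hence the recursion depth is at most $r+s$, not the cotree depth, and one can afford fresh disjoint alphabets at every inductive step ($\ell(G)\le 2f(p-1,q,r)+2$ in the paper's notation), with no letter-reuse argument needed. (The Ramsey-number bookkeeping for non-homogeneous modules in the paper's proof is only needed when the prime quotient $H$ can be large; for cographs $H$ is an edge or non-edge, so that part degenerates.) Without the stripping step and the parameter-decreasing induction, your plan does not close.
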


In Sections~\ref{sec-lettericity} and \ref{sec-mod-decomp}, respectively, we define lettericity and the modular decomposition. Section~\ref{sec-stacked-paths} defines the family of graphs we call stacked paths, and proves that they have unbounded lettericity. Finally, we state and prove our main result in Section~\ref{sec-main-theorem}.
	
\section{Lettericity}
\label{sec-lettericity}
	
Let $\Sigma$ be a finite alphabet, and let ${D\subseteq\Sigma^2}$ be a set of ordered pairs that we call a \emph{decoder}. For any word ${w = w(1)w(2)\cdots w(n)}$ with each letter $w(i) \in \Sigma$, the \emph{letter graph of $w$ with respect to $D$} is the graph $\Gamma_D(w)$ with ${V(\Gamma_D(w)) = \{1,2,\dots,n\}}$ and in which for $i < j$, the vertices $i$ and $j$ are adjacent in $\Gamma_D(w)$ if and only if ${(w(i),w(j)) \in D}$.

If $\Sigma$ is an alphabet of cardinality $k$, then we say that $\Gamma_D(w)$ is a $k$-letter graph. For any graph $G$, the minimum $k$ such that a $G$ is a $k$-letter graph is the \emph{lettericity} of $G$, denoted by $\ell(G)$.  Every finite graph is the letter graph of some word over some alphabet, and in particular the lettericity of a graph $G$ is at most $|V(G)|$. Also note that $\ell(\overline{G})=\ell(G)$, as we may simply replace the decoder $D$ by its complement $\Sigma^2\setminus D$. 

Given a letter graph $\Gamma_D(w)$ and some letter $a \in \Sigma$, we then say that $a$ \emph{encodes} the set of vertices $\{i \in V(\Gamma_D(w)) \st w(i) = a\}$. Note that this set of vertices forms a clique if $(a,a) \in D$, and a co-clique otherwise. Given a graph $G$ such that $G = \Gamma_D(w)$, we say that $(D,w)$ is a \emph{lettering} of $G$, and in particular a \emph{$k$-lettering} if $w$ uses an alphabet of cardinality $k$.

The quintessential example of a class of graphs with bounded lettericity is the class of \emph{threshold graphs}. The threshold graphs have several definitions, but for our purposes the most useful is that they are the graphs that can be constructed by repeatedly adding new dominating vertices (adjacent to all of the vertices previously added) and isolated vertices (adjacent to none of the vertices previously added). We denote these two cases by $G * K_1$ and $G\uplus K_1$, respectively. Thus
\begin{enumerate}[topsep=-4pt, itemsep=-2pt]
	\item the empty graph $K_0$ is a threshold graph, and
	\item if $G$ is a threshold graph, then $G * K_1$ and $G \uplus K_1$ are threshold graphs.
\end{enumerate}
\vspace{6pt}

Equivalently, the threshold graphs are precisely the letter graphs on the alphabet $\Sigma=\{\mathsf{i},\mathsf{d}\}$ with the decoder $D=\{(\mathsf{i},\mathsf{d}),(\mathsf{d},\mathsf{d})\}$. To see this, simply encode vertices---in their order of addition to the graph---by $\mathsf{i}$ if they are added as isolated vertices or by $\mathsf{d}$ if they are added as dominating vertices.

We conclude this section with a result used later. Given two distinct vertices of a graph, a third vertex that is adjacent to one but not the other is said to \emph{distinguish} them.

\begin{proposition}
\label{prop-lettericity-dist}
	If a letter graph $\Gamma_D(w)$ has a pair of vertices $i < k$ with $w(i) = w(k)$, and this pair is distinguished by a third vertex $j$, then $i < j < k$.
\end{proposition}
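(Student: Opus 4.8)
The plan is to argue by contradiction on the position of $j$ relative to $i$ and $k$. Write $a = w(i) = w(k)$ for the common letter and $b = w(j)$ for the letter on the distinguishing vertex. Since $j$ is adjacent to exactly one of $i$ and $k$, it is genuinely a third vertex, distinct from both, so exactly one of $j < i$, $i < j < k$, or $k < j$ holds. My goal is to eliminate the two extreme cases, leaving only $i < j < k$. The key mechanism is that the decoder $D$ reads an \emph{ordered} pair determined by which of the two compared indices is smaller, so when $j$ lies entirely to one side of both $i$ and $k$, the two adjacency tests reduce to the exact same ordered pair.

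First suppose $j < i < k$. Then in the comparison of $j$ with $i$ the smaller index is $j$, so by the definition of a letter graph $j \sim i$ holds if and only if $(w(j),w(i)) = (b,a) \in D$. Likewise, since $j < k$, we have $j \sim k$ if and only if $(w(j),w(k)) = (b,a) \in D$. These are the identical membership condition, so $j \sim i \iff j \sim k$; that is, $j$ is adjacent to both or to neither, and hence does not distinguish $i$ and $k$, a contradiction.

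The symmetric case $i < k < j$ is handled the same way, now with $j$ playing the role of the larger index in both comparisons: $i \sim j \iff (a,b)\in D \iff k \sim j$, so again $j$ fails to distinguish the pair. Having ruled out $j < i$ and $k < j$, the only remaining possibility is $i < j < k$, which is the claim.

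I do not expect a genuine obstacle here, since the argument is a short case check; the one point demanding care is bookkeeping of which vertex supplies the first coordinate of the pair fed to $D$. Because $D\subseteq\Sigma^2$ is a set of ordered pairs and is not assumed symmetric, it is exactly the reversal of the order of $a$ and $b$ between the two comparisons that makes distinction possible, and this reversal occurs \emph{only} when $j$ sits strictly between $i$ and $k$; in the middle case one test reads $(a,b)$ while the other reads $(b,a)$, so there is no contradiction, consistent with $j$ being allowed to distinguish $i$ from $k$.
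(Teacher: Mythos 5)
Your proof is correct and follows exactly the paper's argument: in either extreme case ($j<i<k$ or $i<k<j$), both adjacency tests against $j$ reduce to the same ordered pair in $D$, so $j$ cannot distinguish $i$ from $k$. Your write-up simply spells out the bookkeeping of ordered pairs a bit more explicitly than the paper does.
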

\begin{proof}
	If it were the case that $i < k < j$ (resp., $j < i < k$), then the vertex $j$ of $\Gamma_D(w)$ would be adjacent to either both of the vertices $i$ and $k$ or neither of them, depending on whether ${(w(i),w(j)) \in D}$ (resp., ${(w(j),w(i)) \in D}$).
\end{proof}

\section{Modular Decomposition}
\label{sec-mod-decomp}

Here we briefly review modular decomposition, a concept that has been introduced numerous times in different contexts, but was brought to prominence in the graph context by the work of Gallai~\cite{gallai:transitiv-orien:,gallai:a-translation-o:}. Given a graph $G$, a \emph{module} of $G$ is a set $M \subseteq V(G)$ such that, for all $u,v \in M$, $N(u)\backslash M = N(v) \backslash M$. Note that every singleton is a module, as is the empty set and $V(G)$; we say that a module is \emph{proper} if it is not one of these. We then say that a graph is \emph{prime} if it contains no proper modules.

We make use of two results about modular decomposition. The first is essentially the modular decomposition itself. Given a graph $G$, we say that $G$ is an inflation of the graph $H$ if $G$ can be obtained by replacing every vertex $v \in V(H)$ with its own graph $G_v$, with $V(G_u) \sim V(G_v)$ in $G$ if and only if $u \sim v$ in $H$. We write this as $G = H[G_v \st v \in V(H)]$.

\begin{theorem}
\label{thm-mod-decomp}
	For every graph $G$ on two or more vertices, there exists a unique prime graph $H$ on two or more vertices and a collection of nonempty graphs ${\{G_v\st v\in V(H)\}}$ such that $G$ is isomorphic to ${H[G_v \st v \in V(H)]}$.
\end{theorem}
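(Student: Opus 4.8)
The plan is to prove existence and uniqueness separately, in both cases organizing the argument around the connectivity of $G$ and of its complement $\overline{G}$. The engine throughout is the elementary \emph{module calculus}, which I would record first: if $A$ and $B$ are modules of $G$, then $A\cap B$ is a module; if moreover $A\cap B\neq\emptyset$ then $A\cup B$ is a module; if $A$ and $B$ overlap (meaning $A\cap B$, $A\setminus B$, and $B\setminus A$ are all nonempty) then $A\setminus B$ is a module; and two disjoint modules are either completely adjacent or completely nonadjacent. Each of these follows in a line or two by chasing the defining condition $N(u)\setminus M = N(v)\setminus M$. Call a module \emph{maximal} if it is nonempty, differs from $V(G)$, and is not properly contained in another module having these two properties.

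For existence I would split into three cases. If $G$ is disconnected, pick a component $C$; then $C$ and $V(G)\setminus C$ are modules with $C\nsim V(G)\setminus C$, so $G = \overline{K_2}[\,G[C],\,G[V(G)\setminus C]\,]$ with $\overline{K_2}$ prime. The case where $\overline{G}$ is disconnected is dual, using $K_2$. The substantive case is when both $G$ and $\overline{G}$ are connected, and here I would show the maximal modules \emph{partition} $V(G)$. Two distinct maximal modules $A,B$ that met would give a module $A\cup B$ properly containing $A$, forcing $A\cup B=V(G)$; writing $V(G)=X\sqcup Y\sqcup Z$ with $X=A\setminus B$, $Y=A\cap B$, $Z=B\setminus A$ (all nonempty modules), the calculus forces the three cross-relations $\mathrm{rel}(X,Y)$, $\mathrm{rel}(X,Z)$, $\mathrm{rel}(Y,Z)$ to coincide, so that $G$ or $\overline{G}$ is disconnected, a contradiction. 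These modules cover $V(G)$ because every singleton is a nonempty module different from $V(G)$. Taking $H$ to be the quotient (one vertex per block, adjacency inherited via the disjoint-module dichotomy), one checks $H$ is prime: a proper module of $H$ pulls back to a union of blocks that is a module of $G$ strictly containing a maximal module, a contradiction. This realizes $G=H[G_v]$.

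For uniqueness of $H$ I would first note that $H[G_v]$ is connected if and only if $H$ is, and dually for complements since $\overline{H[G_v]}=\overline{H}[\overline{G_v}]$, so the connectivity type of $G$ is read off from any valid $H$. The only disconnected prime graph is $\overline{K_2}$ (every component is a module, forcing exactly two singleton components), and dually the only prime graph with disconnected complement is $K_2$; hence when $G$ is disconnected every valid $H$ equals $\overline{K_2}$, and when $\overline{G}$ is disconnected every valid $H$ equals $K_2$. When both $G$ and $\overline{G}$ are connected, any valid $H$ has $H$ and $\overline{H}$ connected, so is neither $K_2$ nor $\overline{K_2}$, and in particular every vertex of $H$ has both a neighbor and a non-neighbor. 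I would then show the blocks $G_v$ are \emph{intrinsic}, namely exactly the maximal modules of $G$. The key lemma is that a module $M$ meeting two distinct blocks must meet every block (the set of blocks it meets is a module of $H$, hence all of $V(H)$ by primeness); if such an $M$ were proper, choosing a block $G_{u_0}$ only partially covered by $M$ and using a neighbor and a non-neighbor of $u_0$ in $H$ produces a vertex of $G_{u_0}$ outside $M$ that is simultaneously adjacent and nonadjacent to $M$, a contradiction. Thus every proper module lies inside a single block, from which the blocks are precisely the maximal modules; since these depend only on $G$, both the partition $\{G_v\}$ and the quotient $H$ are determined by $G$.

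The main obstacle is this last step---proving the decomposition is canonical in the both-connected case---and in particular pinning down exactly where connectivity of both $H$ and $\overline{H}$ is used, which is precisely the neighbor/non-neighbor argument above. A secondary subtlety worth flagging is that the collection $\{G_v\}$ is \emph{not} unique in general: when $G$ is disconnected into three or more components one may group them into the two blocks of $\overline{K_2}$ in several ways. The theorem asserts only that the prime graph $H$ is unique, and the analysis above confirms exactly that in all three cases.
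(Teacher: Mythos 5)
The paper does not prove this statement at all: Theorem~\ref{thm-mod-decomp} is quoted as a classical fact (it is Gallai's modular decomposition theorem, per the references \cite{gallai:transitiv-orien:,gallai:a-translation-o:}), so there is no in-paper argument to compare against. Judged on its own, your proof is correct and complete, and it is essentially the standard Gallai argument. The module calculus you list is right (the one step needing a word of care, that $A\setminus B$ is a module when $A$ and $B$ overlap, goes through by routing comparisons via a vertex of $B\setminus A$, which is exactly where the overlap hypothesis enters). The three-way split by connectivity of $G$ and $\overline{G}$ is the standard skeleton: when both are connected, your argument that overlapping maximal proper modules would force $V(G)=X\sqcup Y\sqcup Z$ with all three cross-relations equal (since $X\cup Y$ and $Y\cup Z$ are modules disjoint from $Z$ and $X$ respectively) correctly yields that $G$ or $\overline{G}$ is disconnected, so the maximal modules partition $V(G)$ and the quotient is prime. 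Your uniqueness argument is also sound: the identity $\overline{H[G_v\st v\in V(H)]}=\overline{H}[\overline{G_v}\st v\in V(H)]$ pins down $\overline{K_2}$ and $K_2$ in the two degenerate cases, and in the doubly connected case your key lemma --- that a module meeting two blocks meets all blocks because the set of blocks it meets is a module of the prime quotient, and then a partially covered block plus a neighbor/non-neighbor of it in $H$ gives a contradiction --- shows every module other than $V(G)$ lies in a single block, so the blocks are intrinsically the maximal modules and $H$ is determined. You are also right to flag that uniqueness attaches only to $H$ and not to the collection $\{G_v\st v\in V(H)\}$ (a graph with three or more components admits several groupings into the two blocks of $\overline{K_2}$); this is precisely how the theorem's statement must be read.
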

	
We define the \emph{bull} to be the (prime) graph on five vertices consisting of a $P_4$ and a fifth vertex adjacent to both midpoints of this $P_4$, as shown in Figure~\ref{fig-bull}. This fifth vertex is called the \emph{nose} of the bull. This graph is also known as $A_0$ in the work of Hertz and de Warra~\cite{hertz:on-the-stabilit:} and as $Q_5$ in the work of Cournier and Ille~\cite{cournier:minimal-indecom:}, and its importance to the modular decomposition is given by the following result.

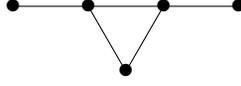
\begin{figure}
	\begin{center}
 	\begin{tikzpicture}[yscale=-1]
 		\draw (1,0)--(4,0);
 		\draw (2,0)--(2.5,{sqrt(3)/2})--(3,0);
 		\absdot{(1,0)};
 		\absdot{(2,0)};
 		\absdot{(3,0)};
 		\absdot{(4,0)};
 		\absdot{(2.5,{sqrt(3)/2})};
 	\end{tikzpicture}
	\end{center}
	\caption{The bull graph, with its nose on the bottom.}
	\label{fig-bull}
\end{figure}

\begin{theorem}[Cournier and Ille~\cite{cournier:minimal-indecom:}]
\label{thm-path-bull}
Every vertex in a prime graph on four or more vertices lies in an induced $P_4$ or is the nose of an induced bull.
\end{theorem}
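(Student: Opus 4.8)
The plan is to fix a vertex $v$ of a prime graph $G$ on $n \geq 4$ vertices, set $A = N(v)$ and $B = V(G)\setminus(A\cup\{v\})$, and assume that $v$ lies in no induced $P_4$; the goal is then to produce an induced bull with nose $v$. Two preliminary observations will be used throughout. First, a prime graph on $n\geq 4$ vertices has no isolated vertex and no dominating vertex, since for such a vertex the remaining $n-1$ vertices would all have the same (empty, resp.\ singleton) neighborhood outside the set and hence form a proper module; consequently both $A$ and $B$ are nonempty. Second, since $P_4$ is self-complementary, and the bull is self-complementary with its nose (the unique degree-$2$ vertex) carried to the nose, the entire statement is invariant under replacing $G$ by $\overline{G}$, which interchanges $A$ and $B$. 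This symmetry lets me prove one half of each symmetric claim and obtain the other for free.

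The first main step is to show that the hypothesis forces $A$ to be a clique and $B$ to be a co-clique. The engine is the following observation: if $b,b'\in B$ are adjacent but have distinct neighborhoods in $A$, say $a\sim b$ and $a\nsim b'$ for some $a\in A$, then the vertices $v,a,b,b'$ induce a $P_4$ (in this order) through $v$, a contradiction. Hence adjacent vertices of $B$ have identical neighborhoods in $A$, so every connected component $C$ of $G[B]$ has a constant neighborhood in $A$; since the vertices of $C$ have no neighbors in $B\setminus C$ and none in $\{v\}$, the set $C$ is a module, and primality forces it to be a singleton. Thus $B$ is a co-clique, and complementing yields that $A$ is a clique. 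Two easy corollaries (again via the no-isolated / no-dominating observation) are that every $b\in B$ has a neighbor in $A$ and every $a\in A$ has a non-neighbor in $B$.

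Next I would read the bull off directly from incomparable neighborhoods. Writing $A_b = N(b)\cap A$, suppose there exist $b,b'\in B$ with $A_b$ and $A_{b'}$ incomparable, and choose $a\in A_b\setminus A_{b'}$ and $a'\in A_{b'}\setminus A_b$. Because $A$ is a clique and $B$ a co-clique, the five vertices induce a bull whose $P_4$ is induced on $b,a,a',b'$ (in this order) and whose nose is $v$. So if $v$ is the nose of no bull, the family $\{A_b : b\in B\}$ must be a chain under inclusion; moreover distinct vertices of $B$ give distinct sets, since two with equal neighborhoods in $A$ would have identical neighborhoods in $G$ and hence form a proper module. Thus the $A_b$ form a strict chain $A_{b_1}\subsetneq\cdots\subsetneq A_{b_m}$.

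Finally I would derive a contradiction from this strict chain. It refines $A$ into layers $A_{b_i}\setminus A_{b_{i-1}}$, and any two vertices of the same layer have identical neighborhoods across $B$ (and across $A\cup\{v\}$, as $A$ is a clique), so each layer is a module and hence a singleton; this pins the bipartite graph between $A$ and $B$ down to a half-graph. In it the vertex of the smallest layer $A_{b_1}$ (nonempty, since every $b$ has an $A$-neighbor) is adjacent to every $b\in B$, to all of $A$, and to $v$ — a dominating vertex, which a prime graph on $n\geq 4$ vertices cannot have. This contradiction shows that $v$ is after all the nose of a bull. I expect the main obstacle to be the first main step: recognizing that the absence of a $P_4$ through $v$ collapses each component of the non-neighborhood into a module is the decisive and least obvious move, after which the clique / co-clique structure renders the remaining steps essentially forced bookkeeping.
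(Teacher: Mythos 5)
Your proposal is correct, but there is nothing in the paper to compare it against: the paper states this theorem as an imported result of Cournier and Ille~\cite{cournier:minimal-indecom:} and gives no proof of it, using it only as a black box in the proof of Theorem~\ref{thm-prime-graphs-letters}. What you have written is therefore a self-contained, elementary proof of the cited result, and it checks out at every step. The preliminary observations are sound (an isolated or dominating vertex $v$ makes $V(G)\setminus\{v\}$ a proper module when $n\ge 4$; the bull is self-complementary with its nose, the unique vertex of degree $2$, carried to the nose, so the statement is invariant under complementation). The decisive first step is right: a $P_4$-free vertex $v$ forces $v\hbox{-}a\hbox{-}b\hbox{-}b'$ to be an induced $P_4$ whenever $b\sim b'$ in $B$ have different traces on $A$, so each component of $G[B]$ is a module, hence a singleton, and complementation gives that $A$ is a clique. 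The bull extraction is also right: with $A$ a clique and $B$ a co-clique, incomparable traces $A_b$, $A_{b'}$ yield the induced path $b\hbox{-}a\hbox{-}a'\hbox{-}b'$ with $v$ adjacent to exactly the two midpoints, i.e., a bull with nose $v$; equal traces would make $\{b,b'\}$ a proper module; and the resulting strict chain $A_{b_1}\subsetneq\cdots\subsetneq A_{b_m}$ collapses because any $a\in A_{b_1}$ (nonempty, as $b_1$ is not isolated) is adjacent to all of $B$, all of $A\setminus\{a\}$, and $v$, hence dominating --- a contradiction.

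Two cosmetic remarks. First, the layer/half-graph digression in your last step is dispensable: the dominating-vertex contradiction needs only that the traces form a chain and that $A_{b_1}\neq\emptyset$, so you can delete the claim that each layer is a module. Second, if you do keep it, "each layer is a module and hence a singleton" should read "hence has at most one vertex," since layers of the chain refinement may be empty. Neither point affects correctness.
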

	
\section{Obstructions to Bounded Lettericity}
\label{sec-stacked-paths}

The statement of our main result features three families of graphs that serve as obstructions to bounded lettericity: matchings, co-matchings, and stacked paths. The lettericity of the perfect matching $mK_2$ is precisely $m$, as was claimed without proof by Petkov{\v{s}}ek~\cite[Section~5.3]{petkovsek:letter-graphs-a:} and proved by Alecu, Lozin, and de Werra~\cite[Lemma~3]{alecu:the-micro-world:}. Below we include a short sketch showing that this lettericity is simply unbounded, as it parallels the approach we take with stacked paths immediately thereafter.

\begin{proposition}
\label{prop-lettericity-matching}
The set of perfect matchings $\{mK_2\st r\ge 1\}$ has unbounded lettericity.
\end{proposition}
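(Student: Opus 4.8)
The plan is to show that in \emph{every} lettering of $mK_2$, no letter can encode more than two vertices. Since $mK_2$ has $2m$ vertices, this forces any alphabet realizing $mK_2$ to contain at least $m$ symbols, so that $\ell(mK_2)\ge m$ and the lettericity of the family grows without bound.

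First I would fix an arbitrary lettering $(D,w)$ of $mK_2$ and a letter $a$ of its alphabet, and recall that the set of vertices encoded by $a$ forms a clique if $(a,a)\in D$ and a co-clique otherwise. The clique case is immediate: a matching contains no triangle, hence no clique larger than $K_2$, so at most two vertices can carry $a$. The substance of the argument lies in the co-clique case.

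Here I would argue by contradiction, supposing that three vertices $i_1<i_2<i_3$ all carry the letter $a$. As these vertices are pairwise nonadjacent, they lie in three distinct edges of the matching, and in particular the unique neighbor $p$ of the middle vertex $i_2$ is distinct from $i_1$ and $i_3$ and nonadjacent to both of them. Thus $p$ distinguishes the equal-letter pair $(i_1,i_2)$ and also the equal-letter pair $(i_2,i_3)$. Applying Proposition~\ref{prop-lettericity-dist} to the first pair gives $i_1<p<i_2$, while applying it to the second gives $i_2<p<i_3$; together these assert $p<i_2$ and $p>i_2$, a contradiction. Hence at most two vertices share the letter $a$, and the lower bound $\ell(mK_2)\ge m$ follows.

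The one point demanding care---and the feature that foreshadows the main difficulty in the stacked-path argument to come---is the need to exhibit, for an overused letter, a single third vertex that Proposition~\ref{prop-lettericity-dist} can be played against in two incompatible ways. For the matching this distinguishing vertex is simply the partner of the middle occurrence, and once it is identified the remainder is routine bookkeeping; for stacked paths the same strategy will apply, but pinning down the analogous distinguishing vertex is where the real work will lie.
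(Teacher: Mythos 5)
Your proof is correct, but it takes a genuinely different route from the paper. You prove the sharp lower bound $\ell(mK_2)\ge m$ directly: a letter $a$ with $(a,a)\in D$ encodes a clique, and a matching is triangle-free, so at most two vertices carry $a$; a letter with $(a,a)\notin D$ encodes a co-clique, and your use of the matching partner $p$ of the middle occurrence $i_2$ as a single vertex distinguishing both equal-letter pairs $(i_1,i_2)$ and $(i_2,i_3)$ correctly forces, via Proposition~\ref{prop-lettericity-dist}, the contradictory conclusions $i_1<p<i_2$ and $i_2<p<i_3$. (You rightly checked that $p$ is distinct from all three occurrences, which is what makes the proposition applicable.) This is essentially a reproof of the exact-lettericity result of Alecu, Lozin, and de~Werra that the paper cites but deliberately does not reprove. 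The paper's own argument is softer: assuming a uniform bound $r$, it pigeonholes the $r^2$ possible edge-letterings in $(mr^2+1)K_2$ to extract $m$ identically lettered edges, concludes that $mK_2$ would then have lettericity $2$, and contradicts the (asserted, not verified) fact that $\ell(3K_2)=3$. Each approach buys something: the paper's pigeonhole argument is the one that parallels its treatment of stacked paths (Propositions~\ref{prop-stacked-path-same-letterings} and~\ref{prop-R2-same-letterings}), which is the authors' stated reason for including it; your argument gives the stronger quantitative bound $\ell(mK_2)\ge m$, needs no unverified base case (indeed it proves $\ell(3K_2)\ge 3$ as a special case), and makes earlier use of Proposition~\ref{prop-lettericity-dist}, the same tool that later drives the $R_2$ analysis---so your closing remark about foreshadowing is apt, even though the paper's stated parallel is the pigeonhole step rather than the distinguishing-vertex step.
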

\begin{proof}
Suppose to the contrary that the lettericity of every matching was bounded by some constant, say $r$. In such an $r$-lettering of a matching, every edge has only $r^2$ possible letterings. Therefore the matching $(mr^2+1)K_2$ must contain $m$ edges with the same labels. It follows that the matching $mK_2$ actually has lettericity $2$. However, it is not difficult to verify that $3K_2$ has lettericity $3$, not $2$.
\end{proof}

Because $\ell(\overline{G})=\ell(G)$, it follows that the lettericity of the family of co-matchings is also unbounded.

It remains to define our final family of restrictions, the \emph{stacked paths}, and show that they have unbounded lettericity. These graphs may be defined inductively via the inflation operation as follows. The first stacked path $R_1$ is the path $P_4$. For $n\ge 2$, the stacked path $R_n$ is obtained by inflating the nose of the bull graph by $R_{n-1}$ (and inflating each of the other vertices by a single vertex). Examples are shown in Figure~\ref{fig-stacked-paths}. From the way these examples are drawn, we see that the stacked path $R_n$ can be viewed as consisting of $n$ copies of $P_4$, which we call its \emph{levels}. (Indeed, it can be proved that these are the only induced copies of $P_4$ in $R_n$, but we do not need this fact.)
	
	The stacked path $R_n$ can also be described as the graph on a clique $C$ and a co-clique $S$, where $V(C) = \{c_{1,1}, c_{1,2}, c_{2,1},\dots,c_{n,2}\}$, $V(S) = \{s_{1,1}, s_{1,2}, s_{2,1},\dots,s_{n,2}\}$, and $s_{u_1,v_1} \sim c_{u_2,v_2}$ if and only if $u_1 \geq u_2$ or both $u_1 = u_2$ and $v_1 = v_2$. From this latter definition, it is clear that the stacked paths are split graphs. This is one way to see that they contain neither $2K_2$ nor $\overline{2K_2}$ as induced subgraphs.

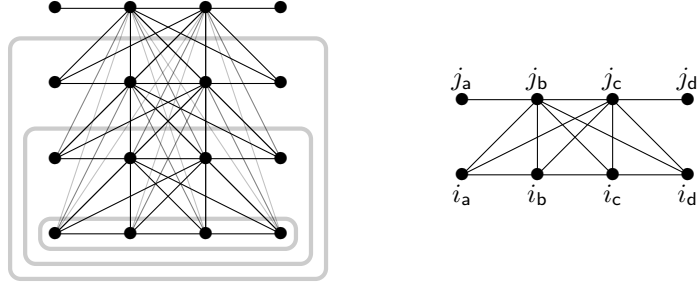
\begin{figure}
	\begin{center}
	 	\begin{tikzpicture}[yscale=-1, baseline=(current bounding box.center)]
			\draw [color=lightgray, ultra thick, rounded corners]
				(0.4,1.4) rectangle (4.6,4.6)
				(0.6,2.6) rectangle (4.4,4.4)
				(0.8,3.8) rectangle (4.2,4.2);
			\foreach \x in {1,2,3,4} {
 				\absdot{(\x,1)};
			}
	 		\draw (1,1)--(4,1);
 			\foreach \x in {2,3} {
				\foreach \i in {1,2,3,4} {
					\foreach \j in {2,3,4} {
						\draw [opacity={pow(1/2,\j-2)}] (\x,1)--(\i,\j);
					}
				}
			}
 			\foreach \x in {1,2,3,4} {
 				\absdot{(\x,2)};
			}
	 		\draw (1,2)--(4,2);
 			\foreach \x in {2,3} {
				\foreach \i in {1,2,3,4} {
					\foreach \j in {3,4} {
						\draw [opacity={pow(1/2,\j-3)}] (\x,2)--(\i,\j);
					}
				}
			}
 			\foreach \x in {1,2,3,4} {
 				\absdot{(\x,3)};
			}
	 		\draw (1,3)--(4,3);
 			\foreach \x in {2,3} {
				\foreach \i in {1,2,3,4} {
					\foreach \j in {4} {
						\draw (\x,3)--(\i,\j);
					}
				}
			}
 			\foreach \x in {1,2,3,4} {
 				\absdot{(\x,4)};
			}
	 		\draw (1,4)--(4,4);
	 	\end{tikzpicture}
	 	\quad\quad\quad\quad
		\begin{tikzpicture}[yscale=-1, baseline=(current bounding box.center)]
			\foreach \x in {1,2,3,4} {
 				\absdot{(\x,1)};
			}
	 		\draw (1,1)--(4,1);
 			\foreach \x in {2,3} {
				\foreach \i in {1,2,3,4} {
					\foreach \j in {2} {
						\draw (\x,1)--(\i,\j);
					}
				}
			}
 			\foreach \x in {1,2,3,4} {
 				\absdot{(\x,2)};
			}
	 		\draw (1,2)--(4,2);
	 		\draw (1,1) node[above] {$j_{\mathsf{a}}$};
	 		\draw (2,1) node[above] {$j_{\mathsf{b}}$};
	 		\draw (3,1) node[above] {$j_{\mathsf{c}}$};
	 		\draw (4,1) node[above] {$j_{\mathsf{d}}$};
	 		\draw (1,2) node[below] {$i_{\mathsf{a}}$};
	 		\draw (2,2) node[below] {$i_{\mathsf{b}}$};
	 		\draw (3,2) node[below] {$i_{\mathsf{c}}$};
	 		\draw (4,2) node[below] {$i_{\mathsf{d}}$};
	 	\end{tikzpicture}
	\end{center}
	\caption{On the left, the stacked path $R_4$. On the right, the stacked path $R_2$, labeled as described in the proof of Proposition~\ref{prop-R2-same-letterings}.}
	\label{fig-stacked-paths}
\end{figure}
	
%

We show that the family of stacked paths has unbounded lettericity with our next two results.

\begin{proposition}
\label{prop-stacked-path-same-letterings}
	If the family of stacked paths were to have bounded lettericity, then it would have lettericity at most four. Moreover, for every positive integer $n$, there would be a lettering of $R_n$ in which all of the vertices $\{s_{i,1}\st 1\le i\le n\}$ were encoded by the same letter, all of the vertices $\{c_{i,1}\st 1\le i\le n\}$ were encoded by the same letter, all of the vertices $\{c_{i,2}\st 1\le i\le n\}$ were encoded by the same letter, and all of the vertices $\{s_{i,2}\st 1\le i\le n\}$ were encoded by the same letter.
\end{proposition}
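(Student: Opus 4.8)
The plan is to argue by a pigeonhole over the \emph{levels} of a single, very large stacked path. Suppose, as in the hypothesis, that every stacked path has lettericity at most some constant $k$. Fix $n$, and take $N$ large compared to $n$ and $k$; concretely $N = (n-1)k^4 + 1$ will suffice. Fix a $k$-lettering $(D,w)$ of $R_N$. For each level $1 \le i \le N$, record the \emph{profile} of that level, by which I mean the ordered quadruple of letters that $w$ assigns to the four vertices $s_{i,1}, c_{i,1}, c_{i,2}, s_{i,2}$. Since the alphabet has at most $k$ symbols, there are at most $k^4$ possible profiles, so by the choice of $N$ some profile is shared by at least $n$ levels, say $i_1 < i_2 < \cdots < i_n$.

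Next I would restrict attention to the $4n$ vertices lying on these $n$ levels. Keeping the decoder $D$ and passing to the subword of $w$ supported on these positions yields a lettering of the induced subgraph $G'$ that they span. The crucial observation is that $G'$ is isomorphic to $R_n$: the clique $C$ and the co-clique $S$ of $R_N$ restrict to a clique and a co-clique, and the cross adjacencies $s \sim c$ in the level description depend only on the \emph{comparison} of the two level indices, together with the second coordinates, and not on the actual values of the indices. Hence the order-preserving relabeling $i_t \mapsto t$ carries $G'$ onto $R_n$, sending $s_{i_t,v}\mapsto s_{t,v}$ and $c_{i_t,v}\mapsto c_{t,v}$ and thus respecting the four columns.

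Transporting the restricted lettering along this isomorphism then yields both conclusions at once. Because all $n$ chosen levels share a single profile $(a,b,c,d)$, the only letters occurring are $a,b,c,d$, so the induced lettering of $R_n$ uses at most four distinct symbols; this gives $\ell(R_n) \le 4$ for every $n$, hence an overall bound of four on the family. Moreover, under the relabeling the vertices $\{s_{i,1}\st 1 \le i \le n\}$ are precisely the images of the $s_{i_t,1}$ and so are uniformly encoded by $a$, and likewise $\{c_{i,1}\st 1 \le i \le n\}$, $\{c_{i,2}\st 1 \le i \le n\}$, and $\{s_{i,2}\st 1 \le i \le n\}$ are each encoded by a single letter $b$, $c$, and $d$, respectively, which is the asserted uniform structure.

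I expect the one point requiring genuine care to be the isomorphism claim $G' \cong R_n$: one must verify that selecting a subset of levels, together with all four vertices on each, returns a stacked path on that many levels. This reduces to confirming that the level-based adjacency description is invariant under order-preserving relabeling of the level indices, which is immediate from its form, since adjacency is governed only by inequalities and equalities among the first coordinates and by agreement of the second coordinates. Everything else, namely the pigeonhole counting and the transfer of the lettering across the isomorphism, is routine.
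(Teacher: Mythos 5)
Your proposal is correct and is essentially the paper's own proof: a pigeonhole over the at most $k^4$ letter-profiles of the levels of $R_{(n-1)k^4+1}$, followed by the observation that any $n$ levels with a common profile induce a copy of $R_n$ whose inherited lettering uses at most four letters and encodes each of the four columns uniformly. The only difference is that you spell out the verification of the induced-copy isomorphism (adjacency depending only on order comparisons of level indices), which the paper asserts in one line.
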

\begin{proof}
	Suppose that every stacked path had lettericity at most $k$, for some $k$. In any $k$-lettering of a stacked path, there are $k^4$ possible letterings of each level. Therefore in any $k$-lettering of $R_{(n-1)k^4+1}$, there must be $n$ levels that all use the same letters to encode the same vertices. These levels form an induced copy of $R_n$, and their letterings satisfy the conditions of the proposition.
\end{proof}

We now show that the family of stacked paths cannot have bounded lettericity by showing that $R_2$ does not have a $4$-lettering that satisfies the conditions of Proposition~\ref{prop-stacked-path-same-letterings}. Alternatively, one could show that $R_3$ does not have a $4$-lettering at all.

\begin{proposition}
\label{prop-R2-same-letterings}
	The stacked path $R_2$ has no 4-lettering in which $\{s_{1,1},s_{2,1}\}$ are encoded by the same letter, $\{c_{1,1},c_{2,1}\}$ are encoded by the same letter, $\{c_{1,2},c_{2,2}\}$ are encoded by the same letter, and $\{s_{1,2},s_{2,2}\}$ are encoded by the same letter.
\end{proposition}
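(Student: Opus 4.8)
The plan is to assume, towards a contradiction, that $R_2$ admits such a lettering $(D,w)$, and then to derive incompatible requirements on the positions of its eight vertices within the word $w$. Write $\mathsf{a}$, $\mathsf{b}$, $\mathsf{c}$, $\mathsf{d}$ for the letters encoding the four monochromatic pairs $\{s_{1,1},s_{2,1}\}$, $\{c_{1,1},c_{2,1}\}$, $\{c_{1,2},c_{2,2}\}$, $\{s_{1,2},s_{2,2}\}$, respectively (these are the labels used in Figure~\ref{fig-stacked-paths}). I want to stress at the outset that the argument will never use that these four letters are distinct: it invokes only that each of the four listed pairs is monochromatic, so the very same proof rules out \emph{any} lettering with this property, not merely genuine $4$-letterings.

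The single tool is Proposition~\ref{prop-lettericity-dist}: if a vertex is adjacent to exactly one member of a monochromatic pair, it must lie strictly between the two members in $w$. The first step is the routine task of reading the distinguishing vertices of each pair off the adjacencies of $R_2$. For instance, $s_{1,1}$ has the single neighbor $c_{1,1}$, whereas $s_{2,1}$ is adjacent to $c_{1,1}$, $c_{1,2}$, and $c_{2,1}$, so the pair of letter $\mathsf{a}$ is distinguished exactly by $c_{1,2}$ and $c_{2,1}$. Carrying this out for all four pairs and applying Proposition~\ref{prop-lettericity-dist} yields four betweenness requirements: $c_{1,2}$ and $c_{2,1}$ lie between $s_{1,1}$ and $s_{2,1}$; $s_{1,1}$ and $s_{2,2}$ lie between $c_{1,1}$ and $c_{2,1}$; $s_{1,2}$ and $s_{2,1}$ lie between $c_{1,2}$ and $c_{2,2}$; and $c_{1,1}$ and $c_{2,2}$ lie between $s_{1,2}$ and $s_{2,2}$.

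The second step extracts a contradiction from these four requirements. Since reversing $w$ and transposing $D$ produces the same graph, I may assume that $s_{1,1}$ precedes $s_{2,1}$ in $w$. The requirements now propagate around a cycle: the first places $c_{1,2}$ to the right of $s_{1,1}$; the second then forces $c_{1,1}$ to the left of $s_{1,1}$ (its partner $c_{2,1}$ already sits to the right of $s_{1,1}$); the third forces $s_{1,2}$ to the right of $c_{1,2}$; and the fourth forces $c_{1,1}$ to the right of $s_{1,2}$. These four facts chain into $c_{1,1} < s_{1,1} < c_{1,2} < s_{1,2} < c_{1,1}$ (reading $<$ as ``earlier in $w$''), which is absurd.

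I expect the one delicate point to be this last step. Each betweenness requirement constrains a vertex only relative to an \emph{unordered} pair, so the orientations cannot simply be read off; instead each requirement must be applied in turn, using the position already pinned down by the previous one to decide which way the next interval opens. The bookkeeping of the first step is mechanical, and the scheme's indifference to whether $\mathsf{a},\dots,\mathsf{d}$ coincide is automatic, since collapsing letters only adds further monochromatic pairs and leaves the four requirements we use intact.
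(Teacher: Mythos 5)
Your proof is correct; I verified both halves against the graph. The distinguisher computations are right: in $R_2$ the pair $\{s_{1,1},s_{2,1}\}$ is distinguished exactly by $\{c_{1,2},c_{2,1}\}$, the pair $\{c_{1,1},c_{2,1}\}$ exactly by $\{s_{1,1},s_{2,2}\}$, the pair $\{c_{1,2},c_{2,2}\}$ exactly by $\{s_{1,2},s_{2,1}\}$, and the pair $\{s_{1,2},s_{2,2}\}$ exactly by $\{c_{1,1},c_{2,2}\}$; and the orientation chase goes through: from $s_{1,1}<s_{2,1}$ one gets $s_{1,1}<c_{1,2},c_{2,1}<s_{2,1}$, then $c_{1,1}<s_{1,1}$, then $c_{1,2}<s_{1,2}$ (since $c_{1,2}<s_{2,1}$ orients the third constraint as $c_{1,2}<c_{2,2}$), and finally $s_{1,2}<c_{1,1}$ (since $c_{1,1}<s_{1,1}<c_{2,1}$ orients the second constraint to give $c_{1,1}<s_{2,2}$), closing the impossible cycle $c_{1,1}<s_{1,1}<c_{1,2}<s_{1,2}<c_{1,1}$. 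You follow the same basic route as the paper---reduce by the word-reversal symmetry and then chase positions via Proposition~\ref{prop-lettericity-dist} to a contradiction---but your execution differs in two genuine respects. The paper places all eight vertices sequentially and, along the way, reasons about the decoder itself (deducing, e.g., that $(\mathsf{a},\mathsf{b})\in D$ while $(\mathsf{b},\mathsf{a})\notin D$, and using vertices adjacent to \emph{both} members of a monochromatic pair to push letters past one another), whereas yours never consults $D$ at all: every step is an application of the one betweenness lemma, which makes the argument shorter and entirely mechanical. Moreover, the paper opens with ``without loss of generality each level uses four distinct letters,'' a reduction that is valid but requires a small unstated letter-splitting argument; your proof needs no such reduction since it only uses that the four pairs are monochromatic, so it directly establishes the slightly stronger statement that is actually invoked after Proposition~\ref{prop-stacked-path-same-letterings}, where distinctness of the four letters is not guaranteed. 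The only compression in your write-up is in steps three and four, where the orientations $c_{1,2}<c_{2,2}$ and $c_{1,1}<s_{2,2}$ are used silently; both follow exactly by the mechanism you describe, so this is a matter of exposition rather than a gap.
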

\newcommand{\ia}{i_{\mathsf{a}}}
\newcommand{\ib}{i_{\mathsf{b}}}
\newcommand{\ic}{i_{\mathsf{c}}}
\newcommand{\id}{i_{\mathsf{d}}}
\newcommand{\ja}{j_{\mathsf{a}}}
\newcommand{\jb}{j_{\mathsf{b}}}
\newcommand{\jc}{j_{\mathsf{c}}}
\newcommand{\jd}{j_{\mathsf{d}}}
\begin{proof}
	Suppose to the contrary that $R_2$ did have such a lettering. Without loss of generality, we may assume that each level uses four distinct letters, say $\{\mathsf{a},\mathsf{b},\mathsf{c},\mathsf{d}\}$. We further name the vertices by $\ia$, $\ib$, $\dots$, $\jd$ as shown on the right of Figure~\ref{fig-stacked-paths}. Note that by our conventions these are both names of vertices in $\Gamma_D(w)$ and indices of letters in $w$, and thus we argue about their relative values, meaning their positions in $w$.

	By symmetry, we may assume that $\ia<\ja$, as otherwise we may consider the reverse of the word $w$ together with the decoder obtained by reversing all pairs in $D$. Since $\ib$ distinguishes $\ia$ from $\ja$, we must have
	\[
		\ia<\ib<\ja.
	\]
	This implies that $(\mathsf{a},\mathsf{b})\in D$ and that $(\mathsf{b},\mathsf{a})\notin D$. Therefore, since $\jb$ is adjacent to both $\ia$ and $\ja$, we must have
	\[
		\ia<\ib<\ja<\jb.
	\]
	Moving on, we see that $\jc$ distinguishes $\ia$ from $\ja$, so we must have
	\[
		\ia<\ib, \jc<\ja<\jb.
	\]
	We do not know the relative position of $\ib$ and $\jc$, but regardless, we must have $(\mathsf{a},\mathsf{c})\in D$ and $(\mathsf{c},\mathsf{a})\notin D$. As $\ic$ is not adjacent to $\ia$ (or $\ja$), this implies that we must have
	\[
		\ic<\ia<\ib, \jc<\ja<\jb.
	\]		
	Next we see that $\id$ distinguishes $\ib$ from $\jb$, so it must lie between these two. Since this forces $\id$ after $\ic$, and $\id$ is adjacent to both $\ic$ and $\jc$, it must also lie after $\jc$, and thus we have
	\[
		\ic<\ia<\ib, \jc<\ja, \id<\jb.
	\]
	Finally we try to place $\jd$. Since $\jd$ distinguishes $\ic$ from $\jc$, it must lie between them:
	\[
		\ic
		\underbrace{<\ia<\ib, }_{\text{$\jd$ here}}
		\jc<\ja, \id<\jb.
	\]
	However, $\id$ is adjacent to $\jb$, while $\jd$ is not, and we have shown that both must lie before $\jb$, so we have reached a contradiction.
\end{proof}
 
\section{Main Theorem}
\label{sec-main-theorem}

We may now state and prove our main result.

\begin{theorem}
\label{thm-prime-graphs-letters}
Let $\C$ be a class of graphs whose prime members have finite lettericity. If $\C$ contains all matchings, all co-matchings, or all stacked paths, then the lettericity of $\C$ is infinite. Otherwise, the lettericity of $\C$ is finite.
\end{theorem}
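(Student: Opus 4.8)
The plan is to separate the two implications and to spend essentially all of the effort on the second (``otherwise'') one. For the forward direction, if $\C$ contains all matchings, all co-matchings, or all stacked paths, then it contains members of unbounded lettericity---matchings by Proposition~\ref{prop-lettericity-matching}, co-matchings by the same proposition together with $\ell(\overline{G})=\ell(G)$, and stacked paths by Propositions~\ref{prop-stacked-path-same-letterings} and~\ref{prop-R2-same-letterings}---so $\ell(\C)$ is infinite and this direction is immediate. For the reverse direction, suppose the prime members of $\C$ satisfy $\ell\le p$, and, using heredity, fix $M$ and $N$ with $MK_2,\overline{MK_2},R_N\notin\C$. I would establish a uniform bound $\ell(G)\le k(p,M,N)$ by peeling off the top of the modular decomposition $G=H[G_v\st v\in V(H)]$ supplied by Theorem~\ref{thm-mod-decomp}, splitting into three cases according to whether $H$ is parallel ($\overline{K_r}$, i.e.\ $G$ disconnected), series ($K_r$), or a genuine prime graph on four or more vertices with $\ell(H)\le p$. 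The whole difficulty is to carry this recursion out with a \emph{bounded} alphabet even though the decomposition tree may be arbitrarily deep.

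At a parallel node, choosing an edge from each component that has one produces an induced matching, so at most $M-1$ components contain an edge; since an edgeless component is a single vertex, all but fewer than $M$ children are single vertices. Dually, at a series node all but fewer than $M$ children are single vertices. Thus each such node is a disjoint union (resp.\ join) of fewer than $M$ heavy pieces with a co-clique (resp.\ clique). The naive estimate $\ell(G_1\uplus G_2)\le \ell(G_1)+\ell(G_2)$ accumulates with depth, so the right tool is the reuse trick that already makes threshold graphs have lettericity $2$ at unbounded depth: let the global linear order carry the tree-traversal information while a bounded palette of letters is reused at every level. This is precisely the mechanism behind Theorem~\ref{thm-cographs-letters} for cographs, which I would either invoke directly or adapt.

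At a genuine prime node I would fix a $p$-lettering of $H$ and extend it. By Theorem~\ref{thm-path-bull} every nontrivially inflated vertex $v$ either lies in an induced $P_4$ of $H$ or is the nose of an induced bull. A Ramsey argument on the nontrivially inflated vertices, together with the no-matching and no-co-matching conditions, shows that all but boundedly many inflations are cliques or co-cliques: a set of clique-inflated vertices that is independent in $H$ yields a matching, a set of co-clique-inflated vertices that forms a clique in $H$ yields a co-matching, and the genuinely mixed inflations are likewise bounded in number. Clique and co-clique inflations can be absorbed into the lettering of $H$ using only a bounded number of variant letters per letter of its alphabet. The remaining nose-of-bull inflations are exactly the stacked-path construction: if a bull-nose $v$ has $G_v\supseteq R_{N-1}$, then $G\supseteq R_N$, so the exclusion of $R_N$ caps the depth to which such inflations can be nested and lets the recursion terminate. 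Combining the series/parallel reuse with this prime-node extension then yields the uniform bound.

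The hard part will be this prime-node analysis, carried out so that the three parameters interlock correctly: one must simultaneously control the inflations by Ramsey, absorb the clique and co-clique ones without letting the alphabet grow with the depth of the tree, and use the no-$R_N$ hypothesis to terminate the nesting at bull-noses. I expect the cleanest formulation to induct on $N$ (or on a potential measuring stacked-path depth) rather than on raw tree depth, precisely because the bull-nose case is the only place where depth genuinely threatens to drive the lettericity up, and it is governed by the stacked-path bound. This is where the content beyond the cograph case of Theorem~\ref{thm-cographs-letters} really lives.
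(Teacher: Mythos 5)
Your skeleton matches the paper's proof in most of its ingredients: the forward direction is handled identically, and the reverse direction likewise proceeds via the modular decomposition, Theorem~\ref{thm-path-bull} at prime nodes, a Ramsey bound on the mixed (neither complete nor edgeless) modules, per-letter absorption of the clique and co-clique modules, and the observation that a bull-nose inflated by $R_{N-1}$ creates $R_N$. But there is a genuine gap in your induction scheme. You propose to induct on $N$ alone, on the grounds that ``the bull-nose case is the only place where depth genuinely threatens to drive the lettericity up.'' That diagnosis is wrong. The boundedly many mixed modules at a prime node are arbitrary graphs of the class and must be lettered recursively; when such a module sits at a vertex that Theorem~\ref{thm-path-bull} places at the \emph{endpoint} or \emph{midpoint} of an induced $P_4$ rather than at a bull-nose, recursing into it does not decrease $N$ at all, so your induction never terminates on those branches. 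What saves the argument---and what you never state---is that these two cases decrease the other two parameters: a module at a $P_4$-endpoint cannot contain $(M-1)K_2$ (its two non-neighbors on the $P_4$ supply a disjoint edge completing an $MK_2$), and a module at a $P_4$-midpoint cannot contain $\overline{(M-1)K_2}$ (its two mutually non-adjacent neighbors supply a non-edge joined completely to the module, completing an $\overline{MK_2}$). The three forbidden families correspond exactly to the three configurations of Theorem~\ref{thm-path-bull}, and the induction must track all three exclusion parameters jointly: the paper defines a $(p,q,r)$ graph to be one excluding $pK_2$, $\overline{qK_2}$, and $R_r$, and inducts on $p+q+r$, so that each of the three cases strictly decreases the sum.

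A secondary point: once the triple induction is in place, your cograph-style ``reuse trick'' at series and parallel nodes becomes unnecessary, and invoking Theorem~\ref{thm-cographs-letters} directly would not be legitimate anyway, since the children of those nodes need not be cographs. The paper instead strips isolated and dominating vertices first, at a cost of two extra letters (the threshold-graph encoding), which absorbs the unbounded threshold-like depth in a single step; after that, a genuine two-part disjoint union has both parts containing an edge, so the matching parameter drops and the union case falls under the same induction, and dually for joins. Your counting at parallel/series nodes (fewer than $M$ children containing an edge, respectively a non-edge) is correct but is subsumed by this cleaner mechanism.
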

\begin{proof}
We have already shown that the lettericity of matchings, co-matchings, and stacked paths is unbounded, and thus so must be the lettericity of any class containing one of these families. It remains to prove the other direction.

For the purposes of this proof, we define a \emph{$(p,q,r)$ graph} to be one that contains none of the graphs $pK_2$, $\overline{qK_2}$, or $R_r$ as induced subgraphs. Let $m$ be a fixed integer. We prove, by induction on $p+q+r$, that there is a function $f(p,q,r)$ so that if every prime induced subgraph of a $(p,q,r)$ graph $G$ has lettericity at most $m$, then the lettericity of $G$ is at most $f(p,q,r)$. We note that no effort has been made to optimize $f(p,q,r)$.

The claim is trivial for $(1,1,r)$ graphs for any $r\ge 1$, as such graphs may contain neither an edge $K_2$ nor a non-edge $\overline{K_2}$ and thus may only be $K_1$, which has lettericity $1$. Thus we may suppose that $p,q\ge 2$, that $p+q+r\ge 5$, and that the claimed function exists for all smaller values of $p+q+r$.

Let $G$ be a $(p,q,r)$ for which the lettericity of all of its prime induced subgraphs is at most $m$. We must first dispense with isolated and dominating vertices. We successively remove such vertices from the graph $G$ until we are left with an induced subgraph $G'$ that has neither type of vertex. It then follows that
\[
	\ell(G)\le\ell(G')+2,
\]
because from any $k$-lettering of $G'$ we can construct a $(k+2)$-lettering of $G$ by adding two new letters, say $\{\mathsf{i},\mathsf{d}\}$, such that $\mathsf{i}$ encodes the isolated vertices and $\mathsf{d}$ encodes the dominating vertices, as in the $2$-letterings of threshold graphs given in Section~\ref{sec-lettericity}.

We now apply the modular decomposition (Theorem~\ref{thm-mod-decomp}) to this graph $G'$ to see that
\[
	G' = H[G_v\st v \in H],
\]
where $H$ is a prime induced subgraph of $G$. Our hypotheses imply that the lettericity of $H$ is at most $m$, so there is some alphabet $\Sigma_H$ of cardinality at most $m$, a decoder $D_H\subseteq\Sigma_H^2$, and a word $w_H$ of length $|V(H)|$ such that $\Gamma_{D_H}(w_H)$ is isomorphic to $H$. We will produce an encoding of $G'$ over a larger alphabet with a related decoder in which each letter of $w_H$ is expanded by a word that encodes the corresponding module of $G'$.

We first examine the case where $H$ has precisely two vertices, so $G' = G_1 \uplus G_2$. Because $G'$ does not have isolated vertices, each of $G_1$ and $G_2$ must contain an edge. This implies that both of these graphs must be $(p-1,q,r)$ graphs, as otherwise $G'$ (and hence also $G$) would contain an induced $pK_2$ subgraph. Thus we can encode each of $G_1$ and $G_2$ with their own alphabet of cardinality at most $f(p-1,q,r)$, and this shows that
\[
	\ell(G)
	\le
	\ell(G')+2
	\le
	\ell(G_1) + \ell(G_2) + 2
	\le
	2f(p-1,q,r)+2.
\]
In the symmetric case where $G'=\overline{G_1 \uplus G_2}$, we obtain the similar bound
\[
	\ell(G)
	\le
	\ell(G')+2
	\le
	\ell(G_1) + \ell(G_2) + 2
	\le
	2f(p,q-1,r)+2.
\]

\begin{figure}
	\begin{center}
	\begin{tikzpicture}
 		\draw (1,0)
 			+(0.25,0)-- +(0,0.25)-- +(-0.25,0)-- +(0,-0.25) --cycle;
		\draw[color=lightgray, fill=lightgray] (3,0.25)
			arc (90:270:0.25) 
			--(4,-0.25)
			arc (-90:90:0.25) 
			--cycle;
 		\draw (1,0)--(4,0);
 		\absdot{(1,0)};
 		\absdot{(2,0)};
 		\absdot{(3,0)};
 		\absdot{(4,0)};
 	\end{tikzpicture}
 	\quad\quad\quad\quad
 	\begin{tikzpicture}
 		\draw (2,0)
 			+(0.25,0)-- +(0,0.25)-- +(-0.25,0)-- +(0,-0.25) --cycle;
		\draw[color=lightgray, fill=lightgray] (0.75,0)
			arc (-180:0:0.25) 
			arc (180:0:0.75)
			arc (-180:0:0.25)
			arc (0:180:1.25);
 		\draw (1,0)--(4,0);
 		\absdot{(1,0)};
 		\absdot{(2,0)};
 		\absdot{(3,0)};
 		\absdot{(4,0)};
 	\end{tikzpicture}
 	\quad\quad\quad\quad
 	\begin{tikzpicture}[yscale=-1]
 		\draw (2.5,{sqrt(3)/2})
 			+(0.25,0)-- +(0,0.25)-- +(-0.25,0)-- +(0,-0.25) --cycle;
		\draw[color=lightgray, fill=lightgray] (1,0.25)
			arc (90:270:0.25) 
			--(4,-0.25)
			arc (-90:90:0.25) 
			--cycle;
 		\draw (1,0)--(4,0);
 		\draw (2,0)--(2.5,{sqrt(3)/2})--(3,0);
 		\absdot{(1,0)};
 		\absdot{(2,0)};
 		\absdot{(3,0)};
 		\absdot{(4,0)};
 		\absdot{(2.5,{sqrt(3)/2})};
 	\end{tikzpicture}
 	\end{center}
 	\caption{The three cases in the proof of Theorem~\ref{thm-prime-graphs-letters}; the vertex of interest is enclosed in a diamond, while the shading indicates the relevant edge, non-edge, or path of length four.}
 	\label{fig-path-bull-cases}
 \end{figure}
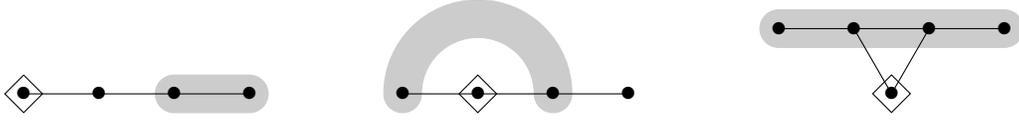

Now suppose that $H$ has more than two vertices (and so $|V(H)| \geq 4$, as there are no prime graphs on three vertices). By Theorem~\ref{thm-path-bull}, every vertex $v\in V(H)$ is either the endpoint of an induced $P_4$, the midpoint of an induced $P_4$, or the nose of a bull. In each of these three cases (which are shown in Figure~\ref{fig-path-bull-cases}), we have a may appeal to induction to bound the lettericity of $G_v$:
\begin{itemize}[topsep=-4pt, itemsep=-2pt]
\item if $v$ is the endpoint of a $P_4$, then $G_v$ must be a $(p-1,q,r)$ graph, as otherwise $G_v$ together with any two vertices of $G'$ that correspond to the non-neighbors of $v$ in this copy of $P_4$ in $H$ would create a copy of $pK_2$ in $G'$;
\item if $v$ is the midpoint of a $P_4$, then $G_v$ must be a $(p,q-1,r)$ graph, as otherwise $G_v$ together with any two vertices of $G'$ that correspond to the neighbors of $v$ in this copy of $P_4$ in $H$ would create a copy of $\overline{qK_2}$ in $G'$;
\item if $v$ is the nose of a bull, then $G_v$ must be a $(p,q,r-1)$ graph, as otherwise $G_v$ together with any four vertices of $G'$ that correspond to the $P_4$ in this copy of the bull in $H$ would create a copy of $R_r$ in $G$.
\end{itemize}

This bounds the lettericity of every module of $G'$, but to bound the lettericity of $G'$ itself we must consider the homogeneous and non-homogeneous modules separately. Define the set $A\subseteq V(H)$ by
\[
	A=\{v\in V(H)\st\text{$G_v$ is neither complete nor edgeless}\},
\]
so it consists of the vertices of $H$ that are inflated by non-homogeneous modules. As each of these modules contains both an edge $K_2$ and a non-edge $\overline{K_2}$, the induced subgraph $H[A]$ cannot contain $K_q$ or $\overline{K_p}$ (in the first case, $G'$ would contain $\overline{qK_2}$, while in the second case, $G'$ would contain $pK_2$). Therefore the cardinality of $A$ is less than the Ramsey number $R(p,q)$. As we have already established that each of the modules $G_v$ for $v\in A$ has bounded lettericity, and we have just established that $A$ has bounded size, we may use disjoint sets of letters to encode each of these modules. Doing so requires at most
\[
	g(p,q,r)
	=
	(R(p,q)-1)
	\cdot
	\max\{ f(p-1,q,r), f(p,q-1,r), f(p,q,r-1) \}
\]
letters.

It remains only to encode the homogeneous modules of $G'$---those that correspond to vertices of $B=V(H)\setminus A$. Choose an $m$-lettering of $H$ (such a lettering exists by our hypotheses). For each letter $a$ in this lettering, there are two cases. If $(a,a)$ lies in the decoder (for $H$), then
\begin{itemize}[topsep=-4pt, itemsep=-2pt]
\item every vertex of every complete module $G_v$ where $v\in B$ is encoded by the letter $a$ may be encoded by the same letter;
\item there are at most $q-1$ edgeless modules $G_v$ where $v\in B$ is encoded by the letter $a$ (as otherwise $G'$ would contain $\overline{qK_2}$), and each of them may be encoded by its own letter.
\end{itemize}
Thus all of the vertices of all of these modules require $q$ letters in total. If $(a,a)$ does not lie in the decoder, then, symmetrically,
\begin{itemize}[topsep=-4pt, itemsep=-2pt]
\item every vertex of every edgeless module $G_v$ where $v\in B$ is encoded by the letter $a$ may be encoded by the same letter;
\item there are at most $p-1$ complete modules $G_v$ where $v\in B$ is encoded by the letter $a$ (as otherwise $G'$ would contain $pK_2$), and each of them may be encoded by its own letter.
\end{itemize}
Thus all of the vertices of all of these modules require $p$ letters in total.

Combining the homogeneous and non-homogeneous modules, we obtain the bound
\[
	\ell(G)
	\le
	\ell(G')+2
	\le
	g(p,q,r)+p+q+2.
\]
This shows that we may take $f(p,q,r)=g(p,q,r)+p+q+2$, and thus completes the proof of the theorem.
\end{proof}

\section{Concluding Remarks}

As the original motivation for the introduction of letter graphs was the study of well-quasi-order, it might be hoped that Theorem~\ref{thm-prime-graphs-letters} would have an application to well-quasi-order. However, its possible implication in that area already follows by more general machinery. Atminas and Lozin~\cite[Theorem~4]{atminas:labelled-induce:} prove that every set of graphs of bounded lettericity is in fact labeled well-quasi-ordered (also known as being well-quasi-ordered by the labeled induced subgraph relation), and they also prove \cite[Theorem~2]{atminas:labelled-induce:} that if the prime graphs in a graph class are labeled well-quasi-ordered, then the entire graph class is labeled well-quasi-ordered. This is stronger than the well-quasi-order implication of our Theorem~\ref{thm-prime-graphs-letters}. While Theorem~\ref{thm-prime-graphs-letters} doesn't have a useful implication for the study of well-quasi-order, we are hopeful that it may be applicable to the characterization of graph classes of bounded lettericity.

Letter graphs are known, via the results of \cite{alecu:letter-graphs-a:abstract,alecu:letter-graphs-a:,alecu:letter-graphs-a:iff}, to have a strong connection to the study of permutation patterns, and here Theorem~\ref{thm-prime-graphs-letters} does have a novel corollary (though again in this context, its well-quasi-order implications are already known, via more general results of Brignall and Vatter~\cite{brignall:labeled-well-qu:}).

In order to keep our account of this application brief, we state it without properly defining the terms, which may be found in the survey~\cite{vatter:permutation-cla:}. Alecu, Ferguson, Kant\'e, Lozin, Vatter, and Zamaraev~\cite{alecu:letter-graphs-a:iff} prove that a permutation class is geometrically griddable if and only if the corresponding graph class has bounded lettericity. Translating our Theorem~\ref{thm-prime-graphs-letters} and combining it with this result yields the following; see Figure~\ref{fig-nested-2413} for a picture of the nestings of $2413$ and $3142$ that appear in the statement of the result.

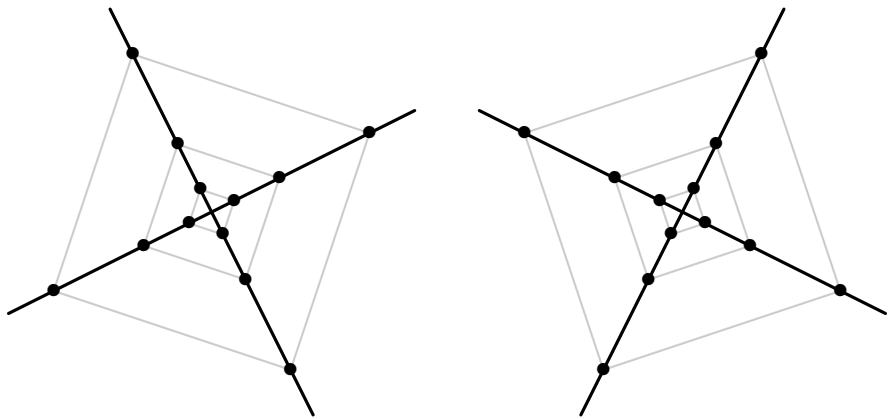
\begin{figure}
	\begin{center}
	\begin{tikzpicture}[scale=0.15, baseline=(current bounding box.center)]
		\draw [very thick, line cap=round] (0,9)--(36,27);
		\draw [very thick, line cap=round] (9,36)--(27,0);
		\draw [thick, lightgray] (16,17)--(17,20)--(20,19)--(19,16)--cycle;
 		\plotpartialperm{16/17, 17/20, 20/19, 19/16};
 		\draw [thick, lightgray] (12,15)--(15,24)--(24,21)--(21,12)--cycle;
		\plotpartialperm{12/15, 15/24, 24/21, 21/12};
 		\draw [thick, lightgray] (4,11)--(11,32)--(32,25)--(25,4)--cycle;
 		\plotpartialperm{4/11, 11/32, 32/25, 25/4};
 	\end{tikzpicture}
 	\quad\quad
	\begin{tikzpicture}[scale=0.15, xscale=-1, baseline=(current bounding box.center)]
		\draw [very thick, line cap=round] (0,9)--(36,27);
		\draw [very thick, line cap=round] (9,36)--(27,0);
		\draw [thick, lightgray] (16,17)--(17,20)--(20,19)--(19,16)--cycle;
 		\plotpartialperm{16/17, 17/20, 20/19, 19/16};
 		\draw [thick, lightgray] (12,15)--(15,24)--(24,21)--(21,12)--cycle;
		\plotpartialperm{12/15, 15/24, 24/21, 21/12};
 		\draw [thick, lightgray] (4,11)--(11,32)--(32,25)--(25,4)--cycle;
 		\plotpartialperm{4/11, 11/32, 32/25, 25/4};
 	\end{tikzpicture}
	\end{center}
\caption{Renditions of nestings of $2413$ (left) and of $3142$ (right).}
\label{fig-nested-2413}
\end{figure}

\begin{corollary}
Let $\C$ be a class of permutations and suppose that the simple permutations of $\C$ lie in a common geometric grid class. If $\C$ contains arbitrarily long sums of $21$, arbitrarily long skew sums of $12$, or arbitrarily long nestings of $2413$ or $3142$, then $\C$ is not geometrically griddable. Otherwise, $\C$ is geometrically griddable.
\end{corollary}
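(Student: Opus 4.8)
The plan is to obtain this corollary as a direct translation of Theorem~\ref{thm-prime-graphs-letters} through the standard dictionary between permutations and graphs. To each permutation $\pi$ of length $n$ I would associate its inversion graph $G_\pi$ on vertex set $\{1,\dots,n\}$, in which $i$ and $j$ are adjacent exactly when they form an inversion of $\pi$; applying this vertex-by-vertex to a permutation class $\C$ produces a graph class $\mathcal{G}_\C$. Two facts supply the bulk of the translation. First, the substitution decomposition of a permutation corresponds to the modular decomposition of its inversion graph, so the simple permutations of $\C$ correspond precisely to the prime graphs of $\mathcal{G}_\C$. Second, by the equivalence of Alecu, Ferguson, Kant\'e, Lozin, Vatter, and Zamaraev~\cite{alecu:letter-graphs-a:iff}, a set of permutations lies in a common geometric grid class if and only if the corresponding set of inversion graphs has bounded lettericity; applied to $\C$ this identifies geometric griddability of $\C$ with bounded lettericity of $\mathcal{G}_\C$, and applied to the simple permutations it recasts the hypothesis ``the simple permutations of $\C$ lie in a common geometric grid class'' as ``the prime graphs of $\mathcal{G}_\C$ have bounded lettericity,'' which is exactly the standing hypothesis of Theorem~\ref{thm-prime-graphs-letters}.

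The remaining work is to match the three obstruction families across the dictionary. The inversion graph of $21$ is a single edge, and a direct sum creates no inversions between its summands, so the inversion graph of the sum of $r$ copies of $21$ is the matching $rK_2$; hence ``arbitrarily long sums of $21$'' translates to ``all matchings.'' Dually, $12$ has inversion graph a single non-edge while a skew sum forces every cross-block pair to be an inversion, so the skew sum of $r$ copies of $12$ has inversion graph the co-matching $\overline{rK_2}$, and ``arbitrarily long skew sums of $12$'' translates to ``all co-matchings.'' Finally, both $2413$ and $3142$ have inversion graph $P_4 = R_1$, and I would verify that a nesting of either, as depicted in Figure~\ref{fig-nested-2413}, has inversion graph exactly the stacked path $R_n$, so that ``arbitrarily long nestings of $2413$ or $3142$'' translates to ``all stacked paths.'' With all three correspondences in hand, Theorem~\ref{thm-prime-graphs-letters} yields the corollary verbatim.

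The main obstacle is the last of these correspondences. The matching and co-matching translations are immediate from how direct and skew sums interact with inversions, but identifying the inversion graph of a nesting with $R_n$ requires tracing the inductive bull-inflation definition of the stacked paths through the geometry of the nesting. Concretely, I would check that each successive ``shell'' of the nesting contributes one level (an induced $P_4$) to the inversion graph, and that the nose of the bull at each stage is inflated by the graph arising from the inner shells, thereby reproducing the recursion in which $R_n$ is obtained from the bull by inflating its nose with $R_{n-1}$. Verifying this identification is the only genuinely computational part of the argument; everything else is bookkeeping already provided by the simple--prime correspondence and the cited griddability equivalence.
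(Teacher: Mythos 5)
Your proposal is correct and follows essentially the same route as the paper, which proves the corollary exactly by this translation: inversion graphs, the correspondence between simple permutations and prime (permutation) graphs, the geometric-griddability/bounded-lettericity equivalence of \cite{alecu:letter-graphs-a:iff}, and the identification of sums of $21$, skew sums of $12$, and nestings of $2413$ or $3142$ with matchings, co-matchings, and the stacked paths $R_n$. The one point worth making explicit is that the ``otherwise'' clause also needs the reverse translations---every permutation whose inversion graph is $rK_2$, $\overline{rK_2}$, or $R_n$ is respectively a sum of $21$s, a skew sum of $12$s, or a nesting of $2413$ or $3142$---which are routine for the first two (connected components of the inversion graph correspond to sum components) and follow for $R_n$ from its primality, since a prime permutation graph determines its realizing permutations up to the inverse and reverse--complement symmetries, which permute the two nesting families among themselves.
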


\section*{Acknowledgements}
We are grateful to the anonymous referees for their comments that greatly improved the paper. We particularly appreciated the first referee's suggestion that we expand the scope of the main result from classes with only finitely many prime graphs to classes whose prime graphs have bounded lettericity, and his thoughts on how to extend our proof to that context.

%
%
%
%
%
%
%


\def\cprime{$'$}

\end{document}